\newtheorem{thm}{Theorem}[section]
\newtheorem{lemm}[thm]{Lemma}
\newtheorem{prop}[thm]{Proposition}
\newtheorem{defi}[thm]{Definition}
\newtheorem{rema}[thm]{Remark}
\newtheorem*{prop*}{Proposition}
\theoremstyle{remark}
\DeclareMathOperator{\tr}{tr}
\DeclareMathOperator{\uhp}{\mathbb{H}}
\DeclareMathOperator{\R}{\mathbb{R}}
\DeclareMathOperator{\Q}{\mathbb{Q}}
\DeclareMathOperator{\N}{\mathbb{N}}
\DeclareMathOperator{\Z}{\mathbb{Z}}
\DeclareMathOperator{\PSL}{PSL}
\DeclareMathOperator{\SO}{SO}
\title{Irrationality of the Length Spectrum}
\author{George Peterzil}
\address{Einstein Institute of Mathematics \\
The Hebrew University of Jerusalem \\ 91904 Jerusalem \\ Israel \vspace{7pt}}
\email{george.peterzil@mail.huji.ac.il}
\author{Guy Sapire}
\email{guy.sapire@mail.huji.ac.il}
\thanks{The second author was supported by the Israel Science Foundation (grant No. 2301/20).}
\begin{document}

\begin{abstract}
    It is a classical result of Dal'Bo that the length spectrum of a non-elementary Fuchsian group is non-arithmetic, namely, it generates a dense additive subgroup of $\mathbb{R}$. In this note we provide an elementary proof of an extension of this theorem: a non-elementary Fuchsian group contains two elements whose lengths are linearly independent over $\Q$, reproving a result of Prasad and Rapinchuk \cite{PrasRap}.
\end{abstract}

\maketitle

\section{Introduction}

Let $\Gamma$ be a non-elementary Fuchsian group, that is, a discrete subgroup of $\PSL_2(\R)$ which does not contain a cyclic subgroup of finite index.

\begin{defi}
    We say that $\gamma\in \Gamma\setminus\{e\}$ is \emph{hyperbolic} whenever it is diagonalizable over $\R$. Given a hyperbolic $\gamma\in \PSL_2(\R)$, we define the \emph{translation length of $\gamma$}, denoted $\ell(\gamma)$, to be $2\log(|\lambda|)$, where $\lambda$ is the eigenvalue of $\gamma$ with $|\lambda|\geq 1$. We define $L(\Gamma)$ to be the set of translation lengths of hyperbolic elements in $\Gamma$.
\end{defi}

The group $\PSL_2(\R)$ is the group of orientation-preserving isometries of the hyperbolic plane.
Under the classical correspondence between (conjugacy classes of) hyperbolic $\gamma\in \Gamma$ and closed geodesics on the orbifold $\uhp/\Gamma$, the set $L(\Gamma)$ can also be characterized as the set of lengths of closed geodesics on $\uhp/\Gamma$, where the latter is equipped with the hyperbolic metric.

The following is a well-known theorem of Dal'Bo.

\begin{thm}[Non-arithmeticity of the length spectrum, \cite{Dal'Bo1}]
\label{Fact: Non-arithmeticity}
    Let $\Gamma\leq\PSL_2(\R)$ be a non-elementary Fuchsian group. Then $L(\Gamma)$ generates a dense additive subgroup of $\R$.
\end{thm}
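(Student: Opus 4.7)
The plan is to proceed by contradiction. Any additive subgroup of $(\R,+)$ is either dense or of the form $c\Z$ with $c\ge 0$, and since $\Gamma$ is non-elementary it contains a hyperbolic element, so $L(\Gamma)\ne\{0\}$. It therefore suffices to derive a contradiction from the assumption $L(\Gamma)\subseteq c\Z$ for some $c>0$. The strategy is to exhibit a family of hyperbolic elements $\alpha^n\beta\in\Gamma$ whose translation lengths admit an asymptotic expansion in which the deviation from the leading linear-in-$n$ term is \emph{exponentially small}; discreteness will then force that deviation to vanish, which in turn will be incompatible with $\alpha$ and $\beta$ having disjoint boundary fixed points.

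Using non-elementarity I first choose hyperbolic $\alpha,\beta\in\Gamma$ whose fixed-point sets on $\bd\uhp$ are disjoint. After conjugating in $\PSL_2(\R)$ I may assume $\alpha=\mathrm{diag}(a,a^{-1})$ with $a>1$, so $\ell(\alpha)=2\log a$ and $\alpha$ fixes $0$ and $\infty$. Writing $\beta=\left(\begin{smallmatrix}p & q\\ r & s\end{smallmatrix}\right)$ with $ps-qr=1$, disjointness of fixed points forces $q,r\ne 0$, and since $\beta$ is hyperbolic its trace $p+s$ is nonzero, so I can further ensure $p\ne 0$ by replacing $\beta$ with $\beta^{-1}$ if necessary.

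The central computation is the asymptotic expansion of $\ell(\alpha^n\beta)$. The trace $\tr(\alpha^n\beta)=a^np+a^{-n}s$ tends to $\pm\infty$, so $\alpha^n\beta$ is hyperbolic for all large $n$. Applying the formula $\ell(g)=2\log\frac{|\tr(g)|+\sqrt{\tr(g)^2-4}}{2}$ and Taylor-expanding to second order in $a^{-n}$, I expect to obtain
\[
    \ell(\alpha^n\beta)\;=\;n\,\ell(\alpha)\;+\;2\log|p|\;+\;\frac{2(ps-1)}{p^{2}}\,a^{-2n}\;+\;O(a^{-4n}).
\]
By assumption, $\ell(\alpha^n\beta)-n\,\ell(\alpha)\in c\Z$ for each large $n$, while the right-hand side converges to $2\log|p|$; discreteness of $c\Z$ forces this sequence to be eventually constant, so the $a^{-2n}$ correction must vanish identically for all large $n$. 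This yields $ps=1$, and combined with $ps-qr=1$ gives $qr=0$, contradicting $q,r\ne 0$.

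The step I expect to be the main obstacle is the explicit expansion: the constant term $2\log|p|$ alone does not suffice, and one must track the next-order term carefully enough to verify that its coefficient is a nonzero multiple of $ps-1$, so that the vanishing of the correction carries genuine algebraic content. A secondary, purely bookkeeping issue is the preliminary reduction to $p\ne 0$, which uses both the hyperbolicity of $\beta$ (so that $p+s\ne 0$, preventing $p=s=0$) and the option of replacing $\beta$ with $\beta^{-1}$.
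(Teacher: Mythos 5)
The paper does not actually prove this statement: Theorem \ref{Fact: Non-arithmeticity} is quoted from Dal'Bo as a black box and is itself an ingredient in the proof of the main theorem, so there is no in-paper argument to compare against. Your proposal is a correct, self-contained proof in the spirit of the classical argument, and the step you flag as the main obstacle does work out. Writing $T_n=\lvert\tr(\alpha^n\beta)\rvert=a^n\lvert p\rvert\bigl(1+a^{-2n}s/p\bigr)$ for large $n$, one has $\log\frac{T_n+\sqrt{T_n^2-4}}{2}=\log T_n-T_n^{-2}+O(T_n^{-4})$, and combining $\log T_n=n\log a+\log\lvert p\rvert+\frac{s}{p}a^{-2n}+O(a^{-4n})$ with $T_n^{-2}=\frac{1}{p^2}a^{-2n}+O(a^{-4n})$ gives exactly
\[
\ell(\alpha^n\beta)=n\,\ell(\alpha)+2\log\lvert p\rvert+\frac{2(ps-1)}{p^2}\,a^{-2n}+O(a^{-4n}),
\]
so the coefficient of the correction is a nonzero multiple of $ps-1=qr$, and its forced vanishing contradicts $q,r\neq 0$. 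The remaining ingredients are all sound: a subgroup of $(\R,+)$ is dense or cyclic; $L(\Gamma)\neq\emptyset$ since a non-elementary Fuchsian group contains hyperbolic elements; the choice of two hyperbolic elements with disjoint fixed-point sets is a standard consequence of non-elementarity (you should cite it, e.g.\ Katok's book); disjointness of fixed points is exactly $q,r\neq 0$; the reduction to $p\neq 0$ via $\beta\mapsto\beta^{-1}$ preserves both hyperbolicity and the fixed-point set; and a sequence in $c\Z$ converging to a limit is eventually equal to that limit. One could remark that your argument in fact yields slightly more than stated, namely that $\langle L(\Gamma)\rangle$ is non-discrete already using the lengths $\ell(\alpha)$ and $\ell(\alpha^n\beta)$ for a single pair $\alpha,\beta$; this is precisely the form in which the paper invokes the theorem at the end of the proof of Theorem \ref{Theorem: Main theorem}.
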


Non-arithmeticity has been widely applied in homogeneous dynamics, for example in proving the topological mixing of the geodesic flow and the density of horocycles originating from a horocyclic limit point of $\Gamma$.

This result has been generalized to related notions of spectra in other settings as well, see e.g. \cite{GUIVARCH2007209}, \cite{Benoist2}, \cite{Kim}. We make use of some basic results on radicals in finitely-generated field extensions of $\Q$ to give an elementary proof of the following result.

\begin{thm}
\label{Theorem: Main theorem}
    Let $\Gamma$ be a non-elementary Fuchsian group. Then $L(\Gamma)\subseteq\R$ contains two elements which are linearly independent over $\Q$.
\end{thm}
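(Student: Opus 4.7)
My approach is to argue by contradiction, combining Dal'Bo's density theorem (Theorem~\ref{Fact: Non-arithmeticity}) with a Kummer-type finiteness statement in the trace field. First, a standard ping-pong argument produces a free rank-$2$ Schottky subgroup $\Gamma_0 = \langle g_1, g_2 \rangle \leq \Gamma$, whose non-identity elements are all hyperbolic. Since $\Gamma_0$ is itself non-elementary and $L(\Gamma_0) \subseteq L(\Gamma)$, it suffices to prove the theorem for $\Gamma_0$. As $\Gamma_0$ is free it lifts to $\SLt(\R)$; fix such a lift and let $k \subset \R$ denote its trace field, a finitely generated extension of $\Q$. Suppose toward a contradiction that any two elements of $L(\Gamma_0)$ are $\Q$-linearly dependent; then there is a fixed $\mu > 1$ such that for every hyperbolic $\gamma$ one has $\lambda_\gamma^2 = \mu^{s_\gamma}$ for some $s_\gamma \in \Q_{>0}$, where $\lambda_\gamma$ is the eigenvalue of (the lift of) $\gamma$ with $|\lambda_\gamma| \geq 1$. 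Since $\lambda_\gamma^2$ is algebraic over $k$, so is $\mu$, and replacing $k$ by $k(\mu)$---still finitely generated over $\Q$---I may assume $\mu \in k$.

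The main algebraic step is to show that $\lambda_\gamma^2 \in k$ for every hyperbolic $\gamma$. Since $\lambda_\gamma^2$ is a root of $x^2 - (\mathrm{tr}(\gamma)^2 - 2)\,x + 1 \in k[x]$, the extension $k(\lambda_\gamma^2)/k$ has degree at most $2$. If its degree were $2$, its nontrivial Galois automorphism $\sigma$ would send $\lambda_\gamma^2$ to the other root $\lambda_\gamma^{-2}$; writing $s_\gamma = p/q$ in lowest terms and applying $\sigma$ to $(\lambda_\gamma^2)^q = \mu^p$---in which $\mu \in k$ is fixed---yields $\lambda_\gamma^{-2q} = \mu^p = \lambda_\gamma^{2q}$, so $\lambda_\gamma^{4q} = 1$, contradicting $|\lambda_\gamma| > 1$. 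Thus $\lambda_\gamma^2 = \mu^{p/q} \in k$, and Bézout's identity then yields $\mu^{1/q} \in k$. I now invoke the basic fact---this is the role of the ``radicals in finitely-generated field extensions'' advertised in the introduction---that a non-torsion element of a finitely generated extension of $\Q$ admits an $n$-th root in the field for only finitely many $n \in \N$. This uniformly bounds the denominators $q_\gamma$ by a single integer $Q$, so $s_\gamma \in \tfrac{1}{Q}\Z$ and $\ell(\gamma) = \log \lambda_\gamma^2 = s_\gamma \log \mu \in \tfrac{\log \mu}{Q}\Z$ for every hyperbolic $\gamma$. Hence $L(\Gamma_0)$ is contained in the discrete cyclic subgroup $\tfrac{\log \mu}{Q}\Z \subset \R$, contradicting the density assertion of Theorem~\ref{Fact: Non-arithmeticity}.

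Working with $\lambda_\gamma^2$ rather than $\lambda_\gamma$ is what sidesteps any sign ambiguity coming from the lift to $\SLt(\R)$; beyond that, everything reduces to a short Galois computation, Dal'Bo as a black box, and the Kummer finiteness statement. I expect the main technical care to be required in formulating and citing that finiteness precisely for an arbitrary finitely generated extension of $\Q$ (as opposed to the familiar number-field case), but the statement itself is standard and follows, e.g., from a suitable choice of valuation on $k$.
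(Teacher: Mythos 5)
Your proposal is correct, and its overall skeleton (contradiction, trace field, the quadratic relation satisfied by the eigenvalue over the trace field, finiteness of radicals in a finitely generated extension of $\Q$, and Dal'Bo's theorem as the final blow) is the same as the paper's. The genuine difference is in how the quadratic case is handled. The paper keeps the statement ``$e^{\ell(\gamma)/2}$ is at most quadratic over $K=k\Gamma(e^{\ell_0/2})$'' and therefore needs the stronger finiteness statement of \Cref{Proposition: Finitely many per degree} (``only finitely many rational $0<q<1$ with $r^q$ of degree at most $t$ over $K$''), which in turn rests on \Cref{Lemma: Degree of radical}. You instead observe that if $\lambda_\gamma^2$ were genuinely quadratic over $k(\mu)$, its Galois conjugate would be the other root $\lambda_\gamma^{-2}$ of $x^2-(\tr(\gamma)^2-2)x+1$, while $(\lambda_\gamma^2)^q=\mu^p$ is $\sigma$-invariant, forcing $\lambda_\gamma^{4q}=1$ --- so in fact $\lambda_\gamma^2\in k(\mu)$ and only the bare ``no high roots'' statement (\Cref{Proposition: No High Root}) is needed. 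This is a real simplification: it makes \Cref{Lemma: Degree of radical} and the ``degree at most $t$'' half of \Cref{Proposition: Finitely many per degree} unnecessary. Your Schottky/lifting reduction is also fine, though heavier than needed; the paper avoids lifting entirely by using $\lvert\tr(\gamma)\rvert=e^{\ell(\gamma)/2}+e^{-\ell(\gamma)/2}$ in $\PSL_2$. Two small points to fix: bounded denominators $q\le Q$ give $s_\gamma\in\frac{1}{\operatorname{lcm}(1,\ldots,Q)}\Z$ (or $\frac{1}{Q!}\Z$ as in the paper), not $\frac{1}{Q}\Z$ --- harmless, since discreteness is all you need; and the Kummer finiteness you invoke is exactly \Cref{Proposition: No High Root}, whose proof in the case where $r$ is an algebraic unit requires Dirichlet's unit theorem rather than merely ``a suitable choice of valuation,'' so that aside undersells the one place where actual number theory enters.
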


This theorem is a special case of a result by Prasad and Rapinchuk (See \cite[Remark 1]{PrasRap}), which was recently generalized in \cite{bipp2023}. In fact, it is shown in \cite[Proposition 1.8]{bipp2023} that $L(\Gamma)$ contains an infinite set of lengths which is linearly independent over $\Q$. Both theorems rely on deep results in number theory. 

It is worth noting that the results in \Cref{algebrosection} can be rephrased and proved in the arithmetic language of heights, in particular using the Northcott property of Moriwaki's height (see \cite[Section 4]{moriwaki}, and in general \cite{Silverman1986}).

\section{Proof of the Theorem}

We first recall a fundamental notion in the theory of discrete groups.

\begin{defi}
    Let $\Gamma$ be a matrix group. The \emph{trace field of $\Gamma$} is defined to be
    $$k\Gamma=\Q\left(\left\{\tr(\gamma):\gamma\in\Gamma\right\}\right)$$ 
\end{defi}

We start with the following.

\begin{lemm}
\label{Lemma: finitely generated trace field}
    The trace field of a finitely-generated matrix group $\Gamma$ is finitely generated over $\Q$.
\end{lemm}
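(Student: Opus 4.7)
The plan is to exhibit $k\Gamma$ as a subfield of a manifestly finitely-generated extension of $\Q$, and then to invoke the standard fact that in characteristic $0$ every subfield of a finitely-generated field extension is again finitely generated over the base.

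First I would fix a finite generating set $\gamma_1,\dots,\gamma_m$ of $\Gamma$, viewed as matrices in $\GL_n(F)$ for some field $F$ of characteristic $0$. Let $R$ be the $\Z$-subalgebra of $F$ generated by the matrix entries of $\gamma_1^{\pm 1},\dots,\gamma_m^{\pm 1}$; this is a finitely generated (hence Noetherian) commutative ring. Since every $\gamma\in\Gamma$ is a word in the $\gamma_i^{\pm 1}$, each matrix entry of $\gamma$, and therefore $\tr(\gamma)$, is a $\Z$-polynomial expression in those generators, and so lies in $R$. Consequently
$k\Gamma=\Q(\tr(\Gamma))\subseteq \mathrm{Frac}(R)$,
and $\mathrm{Frac}(R)$ is a finitely generated field extension of $\Q$.

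It remains to deduce that the intermediate field $k\Gamma$ is itself finitely generated over $\Q$. The transcendence degree of $k\Gamma$ over $\Q$ is finite, bounded by $\mathrm{trdeg}_\Q\mathrm{Frac}(R)$, so one can pick a transcendence basis $t_1,\dots,t_e$ of $k\Gamma$ over $\Q$. The remaining extension $k\Gamma/\Q(t_1,\dots,t_e)$ is algebraic, and one shows it is in fact \emph{finite}: inside the finitely-generated extension $\mathrm{Frac}(R)/\Q(t_1,\dots,t_e)$, pick an additional transcendence basis so that $\mathrm{Frac}(R)$ becomes finite over the combined purely transcendental field, and observe that any intermediate algebraic extension of $\Q(t_1,\dots,t_e)$ has degree bounded (via a standard base-change-by-transcendentals argument in characteristic $0$) by that finite total degree.

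The only non-trivial step is this final descent from ``subfield of a finitely-generated extension'' to ``finitely generated'', which is a classical result in field theory and will be the main obstacle to write down cleanly. In the $\SLt$-setting relevant to this paper one can alternatively avoid it entirely and give an explicit argument: the Fricke trace identity $\tr(AB)+\tr(AB^{-1})=\tr(A)\tr(B)$, combined with $\tr(A)=\tr(A^{-1})$, allows one to express by induction on word length every $\tr(\gamma)$ as a polynomial over $\Z$ in the finitely many traces $\tr(\gamma_{i_1}\cdots\gamma_{i_k})$ with $1\le i_1<\cdots<i_k\le m$, immediately yielding a finite generating set for $k\Gamma$.
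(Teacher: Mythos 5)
Your main argument is essentially the paper's: $k\Gamma$ sits inside the field generated over $\Q$ by the matrix entries of the generators, which is finitely generated, and one then invokes the classical fact that a subfield of a finitely generated field extension is itself finitely generated over the base --- a step the paper leaves implicit but which you correctly identify and sketch via the transcendence-basis/degree-bound argument. Your alternative route via the Fricke identity $\tr(AB)+\tr(AB^{-1})=\tr(A)\tr(B)$ is also valid in the $\SLt$ setting and has the merit of producing an explicit finite generating set of traces without appealing to the subfield theorem, but it is not the approach taken in the paper.
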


\begin{proof}
    If $\Gamma=\langle\gamma_1,...,\gamma_n\rangle$ then the trace field is contained in the extension of $\Q$ generated by the entries of $\gamma_1,...,\gamma_n$, which is finitely-generated.
\end{proof}

Our final ingredient will be the following proposition, which follows directly from \Cref{Lemma: finitely generated trace field} and \Cref{Proposition: Finitely many per degree}, which we will prove later.

\begin{prop}
\label{Proposition: Finitely many per degree prime}
    Let $\Gamma\leq \PSL_2(\R)$ be a finitely-generated Fuchsian group, $k\Gamma$ its trace field, $r\in k\Gamma$ positive with $r\neq 1$, $K/k\Gamma$ a finitely generated extension of $k\Gamma$.
    Then there are only finitely many rational $0<q<1$ such that $r^q$ is at most quadratic over $K$.
\end{prop}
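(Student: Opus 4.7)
The plan is to reduce this statement to \Cref{Proposition: Finitely many per degree}, which handles a fixed bounded degree over a finitely-generated extension of $\Q$. The role of the current proposition is simply to translate the Fuchsian hypothesis into a finitely-generated-over-$\Q$ hypothesis on the ambient field.

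First, I would invoke \Cref{Lemma: finitely generated trace field}: since a finitely-generated Fuchsian group is a finitely-generated subgroup of $\PSL_2(\R)$, its trace field $k\Gamma$ is finitely generated over $\Q$. Then, because $K/k\Gamma$ is finitely generated by assumption and $k\Gamma/\Q$ is finitely generated by the previous sentence, the tower $K/k\Gamma/\Q$ shows that $K$ itself is a finitely-generated field extension of $\Q$. In particular, $r\in k\Gamma\subseteq K$ is a positive element of the finitely-generated extension $K/\Q$ with $r\neq 1$.

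With this setup in place, \Cref{Proposition: Finitely many per degree}, applied with the ambient field $K$ and the degree bound $d=2$, directly yields that there are only finitely many rationals $0<q<1$ with $[K(r^q):K]\leq 2$, which is exactly the desired conclusion.

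The main obstacle is not in this reduction itself, which is essentially bookkeeping, but is rather entirely deferred to \Cref{Proposition: Finitely many per degree}. That proposition is where one must actually control the algebraic degrees of fractional powers $r^q$ over a finitely-generated field of characteristic zero; the remark in the introduction about heights and Northcott's property of Moriwaki's height suggests that this is the substantive step, bounding simultaneously the degree and the height of $r^q$ as $q$ varies in $(0,1)$ so that only finitely many values can occur.
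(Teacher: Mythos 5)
Your reduction is exactly the paper's argument: the paper states that this proposition ``follows directly from'' \Cref{Lemma: finitely generated trace field} and \Cref{Proposition: Finitely many per degree}, i.e.\ precisely the tower argument $K/k\Gamma/\Q$ followed by an application of the latter proposition with degree bound $t=2$. Your proposal is correct and matches the intended proof.
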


\begin{proof}[Proof of \Cref{Theorem: Main theorem}]
    Since every non-elementary $\Gamma$ contains a non-elementary finitely generated subgroup, we will assume that $\Gamma$ is finitely generated. Assume towards a contradiction that $L(\Gamma)\subseteq\ell_0\Q$ for some $\ell_0\in L(\Gamma)$, and for every $\gamma \in \Gamma$, let $q_\gamma \in \mathbb Q$ be such that $\ell(\gamma) = q_\gamma \ell_0$. Let $K=k\Gamma\left(e^{\frac{\ell_0}{2}}\right)$ be the field generated by $e^{\frac{\ell_0}{2}}$ over $k\Gamma$. We claim that for any $\gamma \in \Gamma$ we have that $e^{\frac{\{q_\gamma\}\ell_0}{2}}$ is at most quadratic over $K$, where $\{q_\gamma\} \in [0,1)$ is the fractional part of $q_\gamma$.
    First recall that for any $\gamma \in \Gamma$, by \cite[Lemma 12.1.2]{Maclachlan&Reid} we have $$\lvert\tr(\gamma)\rvert=e^{\frac{\ell(\gamma)}{2}}+e^{-\frac{\ell(\gamma)}{2}}$$
    This means that $x=e^{\frac{\ell(\gamma)}{2}}$ satisfies $x+x^{-1}\in k\Gamma$, so there is some $\lambda\in k\Gamma$ such that $x$ satisfies $x^2-x\lambda+1=0$, hence $e^{\frac{\ell(\gamma)}{2}} = e^{\frac{q_\gamma \ell_0}{2}}$ is at most quadratic over $k\Gamma$ and thus also over $K$. This gives us that $e^{\frac{\{q_\gamma\} \ell_0}{2}} = e^{\frac{q_\gamma \ell_0}{2}}\left(e^{\frac{\ell_0}{2}}\right)^{-\lfloor q_\gamma \rfloor}$ is at most quadratic over $K$, where $\lfloor q_\gamma \rfloor$ denotes the integer part of $q_\gamma$. By \Cref{Proposition: Finitely many per degree prime} with $r=e^{\frac{\ell_0}{2}}$, there is a bound $n \in \mathbb N$ on the denominators of such $q_\gamma$, hence $L(\Gamma)\subseteq \frac{1}{n!}\ell_0\Z$ and in particular $\langle L(\Gamma)\rangle \subseteq \mathbb R$ is a discrete subgroup, but this is a contradiction to \Cref{Fact: Non-arithmeticity}.
\end{proof}

\begin{rema}
    The same proof works for a general non-elementary $\Gamma\leq \SO(n,1)^+$.
\end{rema}

\section{Radicals in Finitely Generated Extensions of \texorpdfstring{$\Q$}{Q}}
\label{algebrosection}

\begin{prop}
    \label{Proposition: No High Root}
    Let $K\subseteq \R$ be a finitely generated extension of $\Q$, $r\in K$ positive with $r\neq 1$. Then there are only finitely many $n\in\N$ such that $r^{\frac{1}{n}}\in K$.
\end{prop}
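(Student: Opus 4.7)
My plan is to suppose for contradiction that $r = s_n^n$ for some $s_n \in K$ and infinitely many $n \in \N$, and then argue by case analysis on the behavior of $r$ under discrete valuations of $K$ trivial on $\Q$. If some such valuation $v$ has $v(r) \neq 0$, then $v(r) = n\, v(s_n)$ forces $n \mid v(r)$, so $n$ is bounded by the fixed integer $|v(r)|$, contradicting infinitude.

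The remaining case is that $v(r) = 0$ for every discrete valuation $v$ of $K$ trivial on $\Q$. Here I plan to show that this forces $r$ (and each $s_n$) to lie in $k_0 := \overline{\Q} \cap K$, reducing to a number field. Writing $K/F$ as a finite separable extension with $F := \Q(t_1, \ldots, t_d)$ purely transcendental, consider the minimal polynomial $m(X) = X^e + a_{e-1} X^{e-1} + \cdots + a_0 \in F[X]$ of $r$ over $F$. For every discrete valuation $v'$ of $F$ trivial on $\Q$, each Galois conjugate of $r$ in the Galois closure $\tilde K$ has valuation $0$ at every extension of $v'$: $\mathrm{Gal}(\tilde K/F)$ acts transitively on these extensions, and each extension restricts to a discrete valuation of $K$ trivial on $\Q$, on which $r$ vanishes by hypothesis. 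The coefficients of $m$, being symmetric functions of the conjugates, then have nonnegative valuation at every $v'$. An elementary lemma---provable via unique factorization in $\Q[t_1, \ldots, t_d]$ (handling the ``finite'' valuations $v_\pi$ at irreducibles) together with a ``valuation at infinity'' arising from the hyperplane at infinity in $\mathbb{P}^d_\Q$---shows that the intersection of all valuation rings of $F$ trivial on $\Q$ equals $\Q$. Hence each coefficient of $m$ lies in $\Q$, and $r$ is algebraic over $\Q$, i.e., $r \in k_0$; the same argument applied to $s_n$ (which satisfies $v(s_n) = v(r)/n = 0$) gives $s_n \in k_0$.

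This reduces the problem to the number field case: $r \in k_0^*$ with $r \neq 1$ and $r \in (k_0^*)^n$ for infinitely many $n$. If some $\mathfrak{p}$-adic valuation is nonzero on $r$, the first case applies; otherwise $r$ is a unit in $\mathcal{O}_{k_0}$. By Dirichlet's unit theorem, $\mathcal{O}_{k_0}^*$ is a finitely generated abelian group, and since $r$ is a positive real with $r \neq 1$, it is not a root of unity, hence has infinite order. In a finitely generated abelian group, an element of infinite order is divisible by only finitely many positive integers, producing the desired contradiction. The main obstacle in this plan will be the valuation-theoretic bookkeeping of the second paragraph---specifically, the careful handling of extensions of valuations to the Galois closure and the verification of the elementary lemma on the intersection of valuation rings of $F$.
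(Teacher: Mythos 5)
Your proposal is correct, but it reaches the number-field endgame by a noticeably heavier route than the paper. Both arguments coincide once $r$ is known to be algebraic: there you use $\mathfrak{p}$-adic divisibility when $(r)$ is not the unit ideal and Dirichlet's unit theorem when it is, exactly as the paper does (the paper phrases the first half via unique factorization of the fractional ideal $(r)$). The real divergence is in how the transcendental possibility is eliminated. The paper passes to $L=\Q(r)$ and its relative algebraic closure $\widetilde L$ in $K$, which is a finite extension of $L$ because $K$ is finitely generated; if $r$ is transcendental then $x^n-r$ is irreducible over $\Q(r)$ (Eisenstein at the prime $r$ of $\Q[r]$), so $[\Q(r^{1/n}):\Q(r)]=n$, and any $n$-th root of $r$ lying in $K$ lies in $\widetilde L$, forcing $n\le[\widetilde L:L]$ --- two lines. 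You instead prove that if every discrete valuation of $K$ trivial on $\Q$ vanishes on $r$ then $r\in\overline{\Q}\cap K$, which requires Noether normalization, a Galois closure, symmetric functions of conjugates, and the lemma that the valuation rings of $\Q(t_1,\dots,t_d)$ trivial on $\Q$ intersect in $\Q$ (correct, via UFD plus the divisorial valuation along the hyperplane at infinity). All of this can be made rigorous, and your case split is exhaustive, so there is no gap; but the valuation-theoretic detour buys you nothing that the degree count does not. One small imprecision: when you say ``the first case applies'' to a nonzero $\mathfrak{p}$-adic valuation of $k_0$, that valuation is \emph{not} trivial on $\Q$ and so is not literally one of the valuations of your first case; what you mean is that the same divisibility argument $n\mid v_{\mathfrak{p}}(r)$ applies, which is fine but should be stated as such.
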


\begin{proof}
    Write $L=\Q(r)$ and denote by $\widetilde L$ the relative algebraic closure of $L$ in $K$.
    Note that $\widetilde L$ is a finite extension of $L$ since it is algebraic over $L$ and a subfield of $K$, which is finitely generated.
    If $r$ is algebraic, then $\widetilde L$ is a number field (i.e a finite extension of $\Q$).
    Write $\mathcal{O}_{\tilde{L}}$ for the ring of integers of $\tilde{L}$, that is, the integral closure of $\Z$ in $\tilde{L}$, and  suppose first $r\in\mathcal{O}^\times_{\widetilde L}$. Since $r$ is integral and $\mathcal{O}_{\widetilde L}$ is integrally closed in $\widetilde L$, any element of the form $r^{\frac{1}{n}}$ is in $\mathcal{O}^\times_{\widetilde L}$. By Dirichlet's unit theorem \cite[Theorem 7.4]{Neurkich} this group is free up to roots of unity. Because $r>0$ and is different from $1$ it is not a root of unity, so it can only have finitely many roots in $\mathcal{O}_{\widetilde L}$. If $r\notin\mathcal{O}^\times_{\widetilde L}$, recall that by the unique factorization of fractional ideals \cite[Corollary 3.9]{Neurkich}, there exist prime ideals $\mathfrak p_1,\ldots, \mathfrak p_t \subseteq \mathcal O_K$ and $e_1, \ldots, e_t \in \mathbb Z \setminus \{0\}$ such that
    $$
    (r) = \mathfrak p_1^{e_1} \ldots \mathfrak p_t^{e_t}
    $$
    so for every $k \in \mathbb N$, if a $k$-th root of $r$ is in $K$ we get
    $$
    (r^{1/k})^k = (r) = \mathfrak p_1^{e_1} \ldots \mathfrak p_t^{e_t}
    $$
    and so from uniqueness necessarily $k \mid e_i$ for every $i \le t$, meaning the only roots of $r$ that could be in $K$ are $d$-th roots for $d$ divisors of $\gcd(e_1,\ldots,e_t)$.
    
    Otherwise $r$ is transcendental. Because $r$ is a free generator of $L$, the degree of $r^{\frac{1}{n}}$ over $L$ is $n$ and so for every $n > [\widetilde L:L]$ necessarily $r^{\frac{1}{n}} \notin K$.
\end{proof}

The following basic lemma will be of use.

\begin{lemm}
\label{Lemma: Degree of radical}
    Let $K\subseteq \R$ be a field, $t,n\in\N$ and $\alpha\in\R^\times$ such that $\alpha^n\in K$ and $[K(\alpha):K] = t$, then $\alpha^t \in K$.
\end{lemm}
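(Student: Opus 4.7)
The plan is to extract $\alpha^t$ from the constant term of the minimal polynomial of $\alpha$ over $K$, using the fact that all of our roots of unity in play must be real.

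First, let $f(x)=x^t+a_{t-1}x^{t-1}+\cdots+a_0\in K[x]$ be the minimal polynomial of $\alpha$ over $K$. Since $\alpha^n\in K$, the polynomial $x^n-\alpha^n$ lies in $K[x]$ and vanishes at $\alpha$, so $f(x)$ divides $x^n-\alpha^n$ in $K[x]$. Factoring over an algebraic closure $\overline{K}$ gives $x^n-\alpha^n=\prod_{i=0}^{n-1}(x-\alpha\zeta_n^i)$ for a primitive $n$-th root of unity $\zeta_n\in\overline{K}$, so the $t$ roots of $f$ in $\overline{K}$ are of the form $\alpha\zeta_n^{i_1},\ldots,\alpha\zeta_n^{i_t}$ for some choice of exponents.

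Next, I would compute the constant term: by Vieta's formulas,
$$a_0=(-1)^t\prod_{j=1}^t\alpha\zeta_n^{i_j}=(-1)^t\alpha^t\zeta,$$
where $\zeta=\zeta_n^{i_1+\cdots+i_t}$ is itself an $n$-th root of unity in $\overline{K}$.

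The decisive step is to use the reality hypothesis. Since $a_0\in K\subseteq\R$ and $\alpha^t\in\R^\times$, the quotient $\zeta=(-1)^t a_0/\alpha^t$ is a real $n$-th root of unity, hence $\zeta\in\{\pm 1\}$. Therefore $\alpha^t=\pm a_0\in K$, as claimed. I do not anticipate a serious obstacle: the only subtle point is noticing that the Galois-theoretic ``obstruction'' to $\alpha^t\in K$ is precisely a root of unity, which the reality of $K$ and $\alpha$ forces to be $\pm 1$.
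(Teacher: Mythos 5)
Your proof is correct and follows essentially the same route as the paper's: divide $x^n-\alpha^n$ by the minimal polynomial, read off the constant term as $(-1)^t\alpha^t$ times an $n$-th root of unity, and use $K\subseteq\R$ and $\alpha^t\in\R^\times$ to force that root of unity to be $\pm 1$. No gaps.
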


\begin{proof}
     The minimal polynomial $p(x)$ of $\alpha$ over $K$ is of degree $t$. Since $x^n-\alpha^n=\prod_{i=1}^n\left(x-\zeta^k_n \alpha\right)$ (for $\zeta_n$ a fixed primitive root of unity) has coefficients in $K$ and has a root $\alpha$, we must have $p(x)\mid x^n-\alpha^n$, hence there are $1\leq k_1,...,k_t\leq n$ such that $p(x)=\prod_{i=1}^t\left(x-\zeta^{k_i}_n \alpha\right)$, and in particular the constant coefficient of $p(x)$ is $(-1)^t\zeta^{k_1+...+k_t}_n\alpha^t \in K$. Since $K \subseteq \R$ and $\alpha^t \in \R$, $(-1)^t\zeta^{k_1+...+k_t}_n$ is a real root of unity hence equal to $\pm 1$, so the constant coefficient of $p(x)$ is $\pm \alpha^t \in K$ and so $\alpha^t \in K$.
\end{proof}

Using this, we prove the following.

\begin{prop}
\label{Proposition: Finitely many per degree}
    Let $K\subseteq \R$ be a finitely generated extension of $\Q$, $r\in K$ positive, $r \ne 1$ and $t\in\N$. Then there are only finitely many $n\in\N$ such that $\left[K\left(r^{\frac{1}{n}}\right):K\right]= t$. Moreover, there are only finitely many rational $0<q<1$ such that $r^q$ is of degree at most $t$ over $K$.
\end{prop}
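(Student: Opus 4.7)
My plan is to reduce both statements to \Cref{Proposition: No High Root}, which already bounds the possible denominators $b$ with $r^{1/b}\in K$. The bridge between knowing $[K(r^{1/n}):K]\leq t$ and extracting an actual root of $r$ inside $K$ will be provided by \Cref{Lemma: Degree of radical}.

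For the first statement, I would assume $[K(r^{1/n}):K]=t$ and apply \Cref{Lemma: Degree of radical} with $\alpha=r^{1/n}$ to conclude that $r^{t/n}\in K$. Writing $t/n=a/b$ in lowest terms, so that $a=t/\gcd(t,n)$ and $b=n/\gcd(t,n)$, I would then use B\'ezout's identity to choose $u,v\in\Z$ with $ua+vb=1$, and form the element $(r^{a/b})^{u}\,r^{v}=r^{1/b}$, which therefore lies in $K$. By \Cref{Proposition: No High Root} only finitely many $b$ can arise this way, and since $n=b\cdot\gcd(t,n)\leq t\cdot b$, the integer $n$ is itself bounded.

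For the moreover part, I would take rational $0<q<1$ with $[K(r^q):K]\leq t$ and write $q=a/n$ in lowest terms with $0<a<n$ and $\gcd(a,n)=1$. The same B\'ezout manipulation (now applied inside $K(r^q)$) shows $r^{1/n}\in K(r^q)$, and conversely $r^q=(r^{1/n})^a$, so $K(r^q)=K(r^{1/n})$. Hence $[K(r^{1/n}):K]=t'$ for some $t'\in\{1,\dots,t\}$; invoking the first part for each such $t'$ yields only finitely many possible values of $n$, and since $0<a<n$, only finitely many $q$ are possible.

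The only delicate point is the bookkeeping in passing from $t/n$ to its reduced form $a/b$ and verifying that the B\'ezout combination lands exactly on $r^{1/b}$, but this is an elementary manipulation once the idea is in place. The substantive work has already been done in \Cref{Proposition: No High Root}, so I do not expect any genuinely new obstacle in this proposition.
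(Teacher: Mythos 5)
Your proposal is correct and follows essentially the same route as the paper: \Cref{Lemma: Degree of radical} to extract $r^{\frac{t}{n}}\in K$, \Cref{Proposition: No High Root} to bound the possible denominators, and a B\'ezout manipulation to handle general rational exponents. Your bookkeeping with the reduced fraction $a/b$ is in fact slightly more explicit than the paper's own write-up (which passes from $r^{\frac{t}{n}}\in K$ to \Cref{Proposition: No High Root} without comment), but the substance is identical.
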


\begin{proof}
    Suppose $n \in \mathbb N$ satisfies $\left[K\left(r^{\frac{1}{n}}\right):K\right]= t$. By \Cref{Lemma: Degree of radical} for $\alpha = r^{\frac{1}{n}}$ we get $r^{\frac{t}{n}} \in K$ and by \Cref{Proposition: No High Root} there are only finitely such $n$. Let $n_0$ be minimal such that for all $n \ge n_0$ we have that $r^{\frac{1}{n}}$ is of degree strictly greater than $t$ over $K$. Assume towards a contradiction that there is some rational $0<q=\frac{m}{n}<1$ such that $r^q\in K$, with $\gcd(m,n)=1$. By B\'ezout's identity, there are integers $x,y\in\mathbb{Z}$ such that $mx+ny=1$, so in particular $r^{\frac{1}{n}}=\left(r^q\right)^x r^y \in K\left(r^q\right)$, so $[K\left(r^q\right):K] \ge \left[K\left(r^{\frac{1}{n}}\right):K\right] > t$, arriving at a contradiction.
\end{proof}

\newpage

\section*{Acknowledgments}
This paper is part of the Ph.D thesis of G.P. conducted at the Hebrew University of Jerusalem, under the supervision of Or Landesberg. G.P. would like to thank Or for suggesting this question and for reviewing multiple versions of this paper.
G.S. would like to thank Borys Kadets for enriching conversations, and Ari Shnidman for reviewing the paper. The authors would like to thank Fran\c{c}oise Dal'Bo for several insightful comments on a previous version of the manuscript, and to Rafael Potrie for pointing out the references \cite{PrasRap} and \cite{bipp2023}.

\bibliographystyle{plainurl}
\bibliography{Bibliography}

\begin{thebibliography}{10}

\bibitem{Benoist2}
Yves Benoist.
\newblock Propriétés asymptotiques des groupes linéaires {II}.
\newblock {\em Advanced Studies Pure Math}, 2000.

\bibitem{bipp2023}
Marc Burger, Alessandra Iozzi, Anne Parreau, and Maria~Beatrice Pozzetti.
\newblock The real spectrum compactification of character varieties, 2023.
\newblock URL: \url{https://arxiv.org/abs/2311.01892}.

\bibitem{Dal'Bo1}
Fran\c{c}oise Dal'bo.
\newblock Remarques sur le spectre des longueurs d'une surface et comptages.
\newblock {\em Bulletin of the Brazilian Mathematical Society}, 1999.

\bibitem{GUIVARCH2007209}
Y.~Guivarc'h and A.~Raugi.
\newblock Actions of large semigroups and random walks on isometric extensions of boundaries.
\newblock {\em Annales Scientifiques de l’École Normale Supérieure}, 40(2):209--249, 2007.

\bibitem{Kim}
Inkang Kim.
\newblock Lenth spectrum in rank one symmetric space is not arithmetic.
\newblock {\em Proceedings of the American Mathematical Society}, 2006.

\bibitem{Maclachlan&Reid}
C.~Maclachland and A.W. Reid.
\newblock {\em The Arithmetic of Hyperbolic 3-Manifolds}.
\newblock Springer International Publishing, 2003.

\bibitem{moriwaki}
Atsushi Moriwaki.
\newblock Arithmetic height functions over finitely generated fields.
\newblock {\em Inventiones mathematicae}, 140:101--142, 04 2000.

\bibitem{Neurkich}
Jürgen Neukirch.
\newblock {\em Algebraic Number Theory}.
\newblock Springer International Publishing, 1995.

\bibitem{PrasRap}
Gopal Prasad and Andrei Rapinchuk.
\newblock Zariski-dense subgroups and transcendental number theory.
\newblock {\em Mathematical Research Letters}, 12:239--249, 03 2005.
\newblock \href {https://doi.org/10.4310/MRL.2005.v12.n2.a9} {\path{doi:10.4310/MRL.2005.v12.n2.a9}}.

\bibitem{Silverman1986}
Joseph~H. Silverman.
\newblock {\em The Theory of Height Functions}, pages 151--166.
\newblock Springer New York, New York, NY, 1986.
\newblock \href {https://doi.org/10.1007/978-1-4613-8655-1_6} {\path{doi:10.1007/978-1-4613-8655-1_6}}.

\end{thebibliography}

\end{document}